\documentclass{amsart}

\usepackage{color,graphicx,amssymb,latexsym,amsfonts,txfonts,amsmath,amsthm}
\usepackage{pdfsync,MnSymbol,yfonts}

\newcommand{\s}{\vspace{0.3cm}}

\newtheorem{theo}{Theorem}[section]
\newtheorem{coro}[theo]{Corollary}
\newtheorem{lemm}[theo]{Lemma}
\newtheorem{prop}[theo]{Proposition}
\theoremstyle{remark}
\newtheorem{rema}[theo]{\bf Remark}

\numberwithin{equation}{section}
\newcommand{\blankbox}[2]{
\parbox{\columnwidth}{\centering}}

\begin{document}

\title{About the Fricke-Macbeath curve}
\author{Ruben A. Hidalgo}
\address{Departamento de Matem\'atica y Estad\'{\i}stica, Universidad de La Frontera, Temuco, Chile}
\email{ruben.hidalgo@ufrontera.cl}

\thanks{Partially supported by Project Fondecyt 1150003 and Project Anillo ACT 1415 PIA-CONICYT}

\subjclass[2010]{14H55, 30F10, 14H37, 14H40}
\keywords{Riemann surface, Algebraic curve, Hurwitz curve, Automorphism, Jacobians}

\begin{abstract}
A closed Riemann surface of genus $g \geq 2$ is called a Hurwitz curve if its group of conformal automorphisms has order $84(g-1)$. In 1895, A. Wiman noticed that  there is no Hurwitz curve of genus $g=2,4,5,6$ and, up to isomorphisms, there is a unique Hurwitz curve of genus $g=3$; this being Klein's plane quartic curve. Later, in 1965, A. Macbeath proved the existence, up to isomorphisms, of a unique Hurwitz curve of genus $g=7$; this known as the Fricke-Macbeath curve. Equations were also provided; that being the fiber product of suitable three elliptic curves. In the same year, W. Edge constructed such a genus seven Hurwitz curve by elementary projective geometry. Such a construction was provided by first constructing a $4$-dimensional family of genus seven closed Riemann surfaces $S_{\mu}$ admitting a group $G_{\mu} \cong {\mathbb Z}_{2}^{3}$ of conformal automorphisms so that $S_{\mu}/G_{\mu}$ has genus zero. In this paper we discuss the above curves in terms of fiber products of classical Fermat curves and we provide a geometrical explanation of the three elliptic curves in Macbeath's description. We also observe that the jacobian variety of each $S_{\mu}$ is isogenous to the product of seven elliptic curves (explicitly given) and, for the particular Fricke-Macbeath curve, we obtain the well known fact that its jacobian variety is isogenous to $E^{7}$ for a suitable elliptic curve $E$.

\end{abstract}

\maketitle

\section{Introduction}
 If $S$ is a closed Riemann surface of genus $g \geq 2$, then its group ${\rm Aut}(S)$, of conformal automorphisms, has order at most $84(g-1)$ (Hurwitz upper bound). One says that $S$ is a Hurwitz curve if $|{\rm Aut}(S)|=84(g-1)$; in this situation the quotient orbifold $S/{\rm Aut}(S)$ is the Riemann sphere $\widehat{\mathbb C}$ with exactly three cone points of respective orders $2,3$ and $7$, and 
$S={\mathbb H}^{2}/\Gamma$, where $\Gamma$ is a torsion free normal subgroup of finite index in the triangular Fuchsian group $\Delta=\langle x,y: x^2=y^3=(xy)^7=1\rangle$.
 
In 1895, A. Wiman \cite{Wiman} noticed that there is no Hurwitz curve in each genera $g \in\{2,4,5,6\}$ and that there is exactly one Hurwitz curve (up to isomorphisms) of genus three, this being given by Klein's quartic $x^3y+y^3z+z^3x=0$; whose group of conformal automorphisms  is the  simple group ${\rm PSL}_{2}(7)$ of order $168$. Later, in 1965,  A. Macbeath \cite{Macbeath} observed that in genus seven there is exactly one (up to isomorphisms) Hurwitz curve, called  the Fricke-Macbeath curve; its automorphisms group is the simple group ${\rm PSL}_{2}(8)$ of order $504$. In the same paper, an explicit equation for the Fricke-Macbeath curve over ${\mathbb Q}(\rho)$, where $\rho=e^{2 \pi i/7}$, is given as the fiber product of three (isomorphic) elliptic curves as follows

\begin{equation}\label{eq0}
\left\{\begin{array}{lcl}
y_{1}^{2}=(x-1)(x-\rho^3)(x-\rho^5)(x-\rho^6)\\
\\
y_{2}^{2}=(x-\rho^{2})(x-\rho^4)(x-\rho^5)(x-\rho^6)\\
\\
y_{3}^{2}=(x-\rho)(x-\rho^3)(x-\rho^4)(x-\rho^5)
\end{array}
\right\} \subset {\mathbb C}^{4}.
\end{equation}

In \cite{Prokhorov} Y. Prokhorov classified the finite simple non-abelian subgroups of the Cremona group of rank $3$, that is, the group of birational automorphisms of the $3$-dimensional complex projective space; these groups being isomorphic to ${\textswab A}_{5}$, ${\textswab A}_{6}$, ${\textswab A}_{7}$, ${\rm PSL}_{2}(7)$, ${\rm PSL}_{2}(8)$ and ${\rm PSp}_{4}(3)$ (where ${\textswab A}_{n}$ denotes the alternating group in $n$ letters). In his classification, the group ${\rm PSL}_{2}(8)$ is seen to act on some smooth Fano threefold in ${\mathbb P}_{\mathbb C}^{8}$ of genus $7$, this being the dual Fano threefold of the Fricke-Macbeath curve.

The uniqueness of the Fricke-Macbeath curve asserts, by results due to J. Wolfart \cite{Wolfart}, that it can be also represented by a curve over ${\mathbb Q}$. Recently, the following affine plane algebraic model  of the Fricke-Macbeath curve was obtained by Bradley Brock (personal communication) 
$$1 + 7xy + 21x^2 y^2 + 35x^3 y^3 + 28x^4 y^4 + 2x^7 + 2y^7 = 0,$$ 
and the following projective algebraic curve model in ${\mathbb P}^{6}$  by Maxim Hendriks in his Ph.D. Thesis \cite{Hendriks}  
$$\left\{ \begin{array}{rll}
-x_1 x_2 + x_1 x_0 + x_2 x_6 + x_3 x_4 - x_3 x_5 - x_3 x_0 - x_4 x_6 - x_5 x_6 &=&0\\
x_1 x_3 + x_1 x_6 - x_2^2 + 2 x_2 x_5 + x_2 x_0 - x_3^2 + x_4 x_5 - x_4 x_0 - x_5^2 &=&0\\
x_1^2 - x_1 x_3 + x_2^2 - x_2 x_4 - x_2 x_5 - x_2 x_0 - x_3^2 + x_3 x_6 + 2 x_5 x_0 - x_0^2 &=&0\\
x_1 x_4 - 2 x_1 x_5 + 2 x_1 x_0 - x_2 x_6 - x_3 x_4 - x_3 x_5 + x_5 x_6 + x_6 x_0 &=&0\\
x_1^2 - 2 x_1 x_3 - x_2^2 - x_2 x_4 - x_2 x_5 + 2 x_2 x_0 + x_3^2 + x_3 x_6 + x_4 x_5 + x_5^2 - x_5 x_0 - x_6^2 &=&0\\
x_1 x_2 - x_1 x_5 - 2 x_1 x_0 + 2 x_2 x_3 - x_3 x_0 - x_5 x_6 + 2 x_6 x_0 &=&0\\
-2 x_1 x_2 - x_1 x_4 - x_1 x_5 + 2 x_1 x_0 + 2 x_2 x_3 - 2 x_3 x_0 + 2 x_5 x_6 - x_6 x_0 &=&0\\
2 x_1^2 + x_1 x_3 - x_1 x_6 + 3 x_2 x_0 + x_4 x_5 - x_4 x_0 - x_5^2 + x_6^2 - x_0^2 &=&0\\
2 x_1^2 - x_1 x_3 + x_1 x_6 + x_2^2 + x_2 x_0 + x_3^2 - 2 x_3 x_6 + x_4 x_5 - x_4 x_0 + x_5^2 - 2 x_5 x_0 + x_6^2 + x_0^2 &=&0\\
x_1^2 + x_1 x_3 - x_1 x_6 + 2 x_2 x_5 - 3 x_2 x_0 + 2 x_3 x_6 + x_4^2 + x_4 x_5 - x_4 x_0 + x_6^2 + 3 x_0^2 &=&0
\end{array}
\right\}.
$$

\s

In 1965, W. Edge \cite{Edge} obtained the Fricke-Macbeath curve, via classical projective geometry, by using the fact that it admits a group $K = G \rtimes \langle T \rangle \cong {\mathbb Z}_{2}^{3} \rtimes {\mathbb Z}_{7}$ of conformal automorphisms. Edge first constructed a family of genus seven Riemann surfaces $S_{\mu}$, each one admitting the group $G_{\mu} \cong {\mathbb Z}_{2}^{3}$ as a group of conformal automorphisms so that $S_{\mu}/G_{\mu}$ has genus zero, and then he was able to isolate one of them in order to admit and extra order seven automorphism; this being the Fricke-Macbeath curve.

In this paper, we provide a different approach (but somehow related to Edge's) to obtain another set of equations and to explain, in a geometric manner,  the three elliptic curves appearing in equation \eqref{eq0}. Part of the results presented in here were already obtained in \cite{Hidalgo:FM}, where certain regular dessins d'enfants (Edmonds maps) were explicitly described over their minimal field of definition \cite{JSW}. We also describe explicitly an isogenous decomposition of the jacobian variety of $S_{\mu}$ as a product of seven (explicit) elliptic curves. Then we explicitly find the parameter $\mu=\mu^{0}$ so that $S_{\mu^{0}}$ is the Fricke-Macbeath curve and observe that these elliptic curves are isomorphic, obtaining the well known fact that its jacobian is isogenous to $E^{7}$, where $E$ is an elliptic curve.

\section{A $4$-dimensional family of genus seven Riemann surfaces}
As already noticed, the Fricke-Macbeath curve $S$ admits an abelian group $G \cong {\mathbb Z}_{2}^{3}$ as a group of conformal automorphism with quotient orbifold $S/G$ being the Riemann sphere (with exactly seven cone points of order two). We proceed to construct a $4$-dimensional of genus seven Riemann surfaces $S_{\mu}$, each one admitting a group $G_{\mu} \cong {\mathbb Z}_{2}^{3}$ as a group of conformal automorphisms so that $S_{\mu}/G_{\mu}$ has genus zero. Moreover, we will observe that each of these genus seven Riemann surfaces have a completely decomposable jacobian variety.

\subsection{A $4$-dimensional family of Riemann surfaces of genus $49$}
Let us consider the $4$-dimensional domain
$$\Omega:=\left\{\mu=(\mu_{4},\mu_{5},\mu_{6},\mu_{7}) \in {\mathbb C}^{4}: \mu_{j} \notin \{0,1\}, \; \mu_{i} \neq \mu_{j}, \; \mbox{for } i \neq j\right\} \subset {\mathbb C}^{4}.$$

As introduced in \cite{CGHR,GHL}, a closed Riemann surface $R$ is called a generalized Fermat curves of type $(2,6)$ if it admits a group $H \cong {\mathbb Z}_{2}^{6}$ of conformal automorphisms so that $R/H$ has genus zero. By the Riemann-Hurwitz formula, $R$ has genus $g=49$ and $R/H$ has exactly seven cone points, each one of order two. As we may identify $S/H$ with the Riemann sphere $\widehat{\mathbb C}$ (by the uniformization theorem) and any three different points can be sent to $\infty, 0, 1$ by a (unique) M\"obius transformation, we may assume these seven cone points to be $\infty$, $0$, $1$, $\mu_{4}^{R}$, $\mu_{5}^{R}$, $\mu_{6}^{R}$ and $\mu_{7}^{R}$, where $\mu^{R}:=(\mu_{4}^{R},\mu_{5}^{R},\mu_{6}^{R},\mu_{7}^{R}) \in \Omega$.

Now, for each $\mu:=(\mu_{4},\mu_{5},\mu_{6},\mu_{7}) \in \Omega$, we may consider the projective algebraic set 
\begin{equation}\label{eqgfc}
\widehat{S}_{\mu}=\left\{ \begin{array}{rclrcl}
x_{1}^{2}+x_{2}^{2}+x_{3}^{2}&=&0,&
\mu_{4} \; x_{1}^{2}+x_{2}^{2}+x_{4}^{2}&=&0,\\
\mu_{5} \; x_{1}^{2}+x_{2}^{2}+x_{5}^{2}&=&0,&
\mu_{6} \; x_{1}^{2}+x_{2}^{2}+x_{6}^{2}&=&0,\\
\mu_{7} \; x_{1}^{2}+x_{2}^{2}+x_{7}^{2}&=&0
\end{array}
\right\} \subset {\mathbb P}_{\mathbb C}^{6}.
\end{equation}

As the matrix
$$\left[ 
\begin{array}{rllllll}
2x_{1} & 2x_{2} & 2x_{3} & 0 & 0 & 0 & 0\\
2\mu_{4}x_{1} & 2x_{2} & 0 & 2x_{4} & 0 & 0 & 0\\
2\mu_{5}x_{1} & 2x_{2} & 0 &  0 & 2x_{5} & 0 & 0\\
2\mu_{6}x_{1} & 2x_{2} & 0 &  0 & 0 & 2x_{6} & 0 \\
2\mu_{7}x_{1} & 2x_{2} & 0 &  0 & 0 & 0 & 2x_{7}\\
\end{array}
\right]
$$
has maximal rank for $(x_{1},\ldots,x_{7}) \in {\mathbb C}^{7}-\{(0,\ldots,0)\}$ satisfying the above system of equations \eqref{eqgfc}, 
it turns out that $\widehat{S}_{\mu}$ is a smooth projective algebraic curve, so (by the implicit function theorem)  a closed Riemann surface.

If $j \in \{1,\ldots,7\}$, then denote by $a_{j} \in {\rm GL}_{7}({\mathbb C})$ the order two linear transformation defined 
by multiplication of the coordinate $x_{j}$ by $-1$.  It can be seen that each $a_{j}$ keeps $\widehat{S}_{\mu}$ invariant; so it induces a conformal automorphism of order two on it (which we will still be denoting as $a_{j}$). In this way, the group
$$H_{\mu}=\langle a_{1},\ldots,a_{6} \rangle \cong {\mathbb Z}_{2}^{6}$$
is a group of conformal automorphisms of $\widehat{S}_{\mu}$ and $a_{1}a_{2}a_{3}a_{4}a_{5}a_{6}a_{7}=1.$

Let $A$ be the M\"obius transformation so that $A(1)=\infty$, $A(\rho)=0$ and $A(\rho^{2})=1$, that is, 
$$A(x)=\frac{(1+\rho)(x-\rho)}{\rho(x-1)}.$$

It is not difficult to check that the map
$$P_{\mu}:\widehat{S}_{\mu} \to \widehat{\mathbb C}: 
[x_{1}: x_{2}: x_{3}: x_{4}: x_{5}: x_{6}:  x_{7}] \mapsto A^{-1}\left(-\left(\frac{x_{2}}{x_{1}}\right)^{2}\right)$$
is a regular branched cover, whose branch locus is the set 
$$B_{\mu}:=\{1, \rho, \rho^{2}, A^{-1}(\mu_{4}), A^{-1}(\mu_{5}),A^{-1}(\mu_{6}),A^{-1}(\mu_{7})\},$$
and whose deck group is $H_{\mu}$. In particular, $\widehat{S}_{\mu}$ is an example of a generalized Fermat curve of type $(2,6)$. 
In \cite{CGHR,GHL} it was observed that there is an isomorphism $\phi:R \to \widehat{S}_{\mu^{R}}$ conjugating $H$ to $H_{\mu^{R}}$.

As a consequence of the results in \cite{HKLP}, the group $H_{\mu}$ is a normal subgroup of ${\rm Aut}(\widehat{S}_{\mu})$ (in fact, in \cite{HKLP} it was proved that $H_{\mu}$ is the unique subgroup $L \cong {\mathbb Z}_{2}^{6}$ of ${\rm Aut}(\widehat{S}_{\mu})$ with quotient orbifold $\widehat{S}_{\mu}/L$ being of genus zero). The normality property of $H_{\mu}$ asserts the existence of a natural homomorphism
$$\widehat{\theta}_{\mu}:{\rm Aut}(\widehat{S}_{\mu}) \to {\rm PSL}_{2}({\mathbb C}): T \mapsto A_{T}$$
whose kernel is $H_{\mu}$ and satisfies
$$P_{\mu} \circ T= A_{T} \circ P_{\mu}, \; T \in {\rm Aut}(\widehat{S}_{\mu}).$$

\begin{rema}\label{observa1}
As a consequence of the Klein-Koebe-Poincar\'e uniformization theorem, there is a Fuchsian group 
$$\Gamma_{\mu}=\langle \alpha_{1},...,\alpha_{7}: \alpha_{1}^{2}=\cdots=\alpha_{7}^{2}=\alpha_{1} \alpha_{2} \cdots \alpha_{7}=1\rangle$$
acting on the hyperbolic plane ${\mathbb H}^{2}$ uniformizing the orbifold $\widehat{S}_{\mu}/H_{\mu}$. The derived subgroup $\Gamma_{\mu}'$ of $\Gamma_{\mu}$ is torsion free and $\widehat{S}_{\mu}={\mathbb H}^{2}/\Gamma_{\mu}'$; $H_{\mu}=\Gamma_{\mu}/\Gamma_{\mu}'$ (see, for instance, \cite{CGHR}). The generator $\alpha_{j}$ induces the involution $a_{j}$, for $j=1,\ldots,7$. This in particular asserts that the image of $\widehat{\theta}_{\mu}$ is exactly the stabilizer of the set $B_{\mu}$ in the group of M\"obius transformations.
\end{rema}


In \cite{CGHR} it was observed that the only non-trivial elements of $H_{\mu}$ acting with fixed points on $\widehat{S}$ are $a_{1},\ldots, a_{6}$ and $a_{7}$. This property, together the normality of $H_{\mu}$, asserts the existence of another natural homomorphism
$$\theta_{\mu}: {\rm Aut}(\widehat{S}_{\mu}) \to {\rm Aut}(H_{\mu})={\rm Aut}({\mathbb Z}_{2}^{6}): T \mapsto \Lambda_{T},$$
where 
$$\Lambda_{T}(a_{j})=T a_{j} T^{-1} \in \{a_{1},\ldots,a_{7}\}.$$

It can be seen that the kernel of $\theta_{\mu}$ is again $H_{\mu}$ (just note that if $T$ belongs to the kernel of $\theta_{\mu}$, then $A_{T}=I$ as it will necessarily fix each of the points in $B_{\mu}$).

\begin{rema}
Clearly, every automorphism of $H_{\mu}$ in the image of $\theta_{\mu}$ must keep invariant the set $\{a_{1},\ldots,a_{7}\}$. If 
$\eta \in {\rm Aut}(H_{\mu})$ is so that $\eta(a_{j}) \in \{a_{1},\ldots,a_{7}\}$, for every $j=1,\ldots,7$, then there is some orientation-preserving homeomorphism $F:\widehat{S}_{\mu} \to \widehat{S}_{\mu}$ normalizing $H_{\mu}$ and inducing $\eta$. Sometimes, one may chose $\mu$ in order to assume $F$ to be a conformal automorphism of $\widehat{S}_{\mu}$, but this is not in general true.
For instance, if $\eta(a_{1})=a_{2}$, $\eta(a_{2})=a_{3}$, $\eta(a_{3})=a_{1}$, $\eta(a_{4})=a_{5}$, $\eta(a_{5})=a_{6}$, $\eta(a_{6})=a_{7}$ and $\eta(a_{7})=a_{4}$, then the existence of 
$T \in {\rm Aut}(\widehat{S}_{\mu})$ so that $\eta=\Lambda_{T}$ will assert the existence of a M\"obius transformation $A_{T} \in {\rm PSL}_{2}({\mathbb C})$ such that 
 $A_{T}(1)=\rho$, $A_{T}(\rho)=\rho^{2}$, $A_{T}(\rho^{2})=1$, $A_{T}(A^{-1}(\mu_{4}))=A^{-1}(\mu_{5})$, $A_{T}(A^{-1}(\mu_{5}))=A^{-1}(\mu_{6})$, $A_{T}(A^{-1}(\mu_{6}))=A^{-1}(\mu_{7})$ and $A_{T}(A^{-1}(\mu_{7}))=A^{-1}(\mu_{4})$, which is clearly impossible. 
\end{rema}

If we set
$$A(\mu_{4},\mu_{5},\mu_{6},\mu_{7})=\left( \frac{1}{\mu_{4}},\frac{1}{\mu_{5}},\frac{1}{\mu_{6}},\frac{1}{\mu_{7}} \right),
B(\mu_{4},\mu_{5},\mu_{6},\mu_{7})=\left( \frac{\mu_{7}}{\mu_{7}-1},\frac{\mu_{7}}{\mu_{7}-\mu_{4}},\frac{\mu_{7}}{\mu_{7}-\mu_{5}},\frac{\mu_{7}}{\mu_{7}-\mu_{6}} \right),
$$
then ${\mathbb G}=\langle A, B\rangle \cong {\mathfrak S}_{7}$ is a group of holomorphic automorphisms of $\Omega$ and, 
in \cite{GHL} it was observed that, for $\mu, \widetilde{\mu} \in \Omega$, the Riemann surfaces $\widehat{S}_{\mu}$ and $\widehat{S}_{\widetilde{\mu}}$ are isomorphic if and only if they are ${\mathbb G}$-equivalent.
Let us also observe that, for each $\sigma \in {\rm Perm}\{4,5,6,7\} \cong {\mathfrak S}_{4}$,  $T_{\sigma} \in {\mathbb G}$, where
$$T_{\sigma}(\mu_{4},\mu_{5},\mu_{6},\mu_{7})=(\mu_{\sigma(4)},\mu_{\sigma(5)},\mu_{\sigma(6)},\mu_{\sigma(7)}).$$

\subsection{A $4$-dimensional family of genus seven Riemann surfaces}
For each $\mu \in \Omega$, let $\lambda, \zeta \in {\rm Aut}(H_{\mu})$ be defined as:
$$\lambda(a_{1})=a_{2},\; \lambda(a_{2})=a_{3},\; \lambda(a_{3})=a_{4},\; \lambda(a_{4})=a_{5},\; \lambda(a_{5})=a_{6},\; \lambda(a_{6})=a_{7},\; \lambda(a_{7})=a_{1},$$
$$\zeta(a_{1})=a_{2},\; \zeta(a_{2})=a_{1},\; \zeta(a_{3})=a_{7},\; \zeta(a_{7})=a_{3},\; \zeta(a_{4})=a_{6},\; \zeta(a_{6})=a_{4},\; \zeta(a_{5})=a_{5}.$$

The subgroup $\langle \lambda, \zeta\rangle$ of ${\rm Aut}(H_{\mu})$ is isomorphic to the dihedral group of order $14$. 

It can be checked (see also Section \ref{Sec:FM}) that $\lambda$ belongs the image of $\theta_{\mu^{0}}$, where $$\mu^{0}=(A(\rho^{3}),A(\rho^{4}),A(\rho^{5}),A(\rho^{6})) \in \Omega,$$
and $\zeta$ belongs to the image of $\theta_{(\mu_{4},\mu_{5},\mu_{6},\mu_{7})}$, where
$$\mu_{6}=\mu_{5}^{2}/\mu_{4}, \; \mu_{7}=\mu_{5}^{2}, \; \mu_{5}^{6}-5\mu_{5}^{4}-6\mu_{5}^{2}-1=0.$$

Note that $\mu^{0}$ satisfies these last conditions; so $\zeta$ also belongs to the image of $\theta_{\mu^{0}}$.

\begin{lemm}\label{rhoinv}
The only subgroups of $H_{\mu}$ which are $\lambda$-invariant are 
$$H_{\mu},\;
K_{\mu}=\langle a_{1}a_{3}a_{7}, a_{2}a_{3}a_{5},a_{1}a_{2}a_{4}\rangle\; and \;
K_{\mu}^{*}=\langle a_{1}a_{2}a_{6}, a_{2}a_{3}a_{7},a_{1}a_{3}a_{4}\rangle.$$

Moreover, the automorphism $\zeta$ permutes $K_{\mu}$ with $K_{\mu}^{*}$.
\end{lemm}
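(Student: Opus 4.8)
The statement is purely group-theoretic: we must (i) classify the subgroups of $H_\mu\cong{\mathbb Z}_2^6$ that are invariant under the order-$7$ automorphism $\lambda$, and (ii) check that the extra involution $\zeta$ swaps the two nontrivial candidates. I would work in the ${\mathbb F}_2$-vector space $V=H_\mu\cong{\mathbb F}_2^6$, writing $v_j$ for the image of $a_j$; the crucial algebraic relation is $v_1+v_2+\cdots+v_7=0$, i.e. $V$ is naturally the quotient ${\mathbb F}_2^7/\langle(1,1,\dots,1)\rangle$ — equivalently, the even-weight subcode ${\mathcal E}\subset{\mathbb F}_2^7$ modulo the all-ones vector, but it is cleanest to realize $V$ as ${\mathbb F}_2^7/\langle \mathbf 1\rangle$ directly. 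Then $\lambda$ acts as the $7$-cycle $\sigma=(1\,2\,3\,4\,5\,6\,7)$ permuting coordinates, which descends to $V$. A $\lambda$-invariant subspace $W\le V$ pulls back to a $\sigma$-invariant subspace $\widetilde W\le{\mathbb F}_2^7$ containing $\mathbf 1$.

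\textbf{Step 1: reduce to cyclic codes.} A $\sigma$-invariant subspace of ${\mathbb F}_2^7$ is exactly a cyclic code of length $7$, hence an ideal in ${\mathbb F}_2[t]/(t^7-1)$. Over ${\mathbb F}_2$ one has the factorization $t^7-1=(t-1)(t^3+t+1)(t^3+t^2+1)$ into irreducibles, so ${\mathbb F}_2[t]/(t^7-1)\cong{\mathbb F}_2\times{\mathbb F}_8\times{\mathbb F}_8$ and there are exactly $2^3=8$ ideals, given by choosing a subset of the three factors. The ones containing $\mathbf 1$ (the all-ones vector, which spans the ${\mathbb F}_2$-factor and lies in the radical of none of the ${\mathbb F}_8$-factors) are precisely the four ideals whose ${\mathbb F}_2$-component is nonzero. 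Quotienting each by $\langle\mathbf 1\rangle$ gives exactly four $\lambda$-invariant subspaces of $V$, of dimensions $0,3,3,6$. The dimension-$0$ one is trivial and the dimension-$6$ one is all of $V$; the two dimension-$3$ subspaces are the minimal cyclic codes generated by $t^3+t+1$ and $t^3+t^2+1$, reduced mod $\mathbf 1$. Identifying one of them with $K_\mu$ and the other with $K_\mu^*$ is then a finite check: verify that $a_1a_3a_7$, $a_2a_3a_5$, $a_1a_2a_4$ are (images of) codewords of weight-category matching one of the two minimal codes and are permuted cyclically by $\lambda$ (indeed $\lambda$ sends $a_1a_3a_7\mapsto a_2a_4a_1$, etc., and one checks these three generators are $\lambda$-orbits spanning a $3$-space), and similarly for $K_\mu^*$; since the two minimal cyclic codes are the unique $\lambda$-invariant $3$-spaces, the labeling is forced.

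\textbf{Step 2: $\zeta$ swaps $K_\mu$ and $K_\mu^*$.} Since $\zeta$ normalizes $\langle\lambda\rangle$ (it inverts $\lambda$, as $\langle\lambda,\zeta\rangle$ is dihedral of order $14$), it permutes the set of $\lambda$-invariant subspaces; it fixes $\{0\}$ and $V$, so it either fixes each of $K_\mu,K_\mu^*$ or swaps them. To rule out the first alternative I would simply apply $\zeta$ to the three given generators of $K_\mu$: using the defining values of $\zeta$ on the $a_j$, compute e.g. $\zeta(a_1a_3a_7)=a_2a_7a_3$, $\zeta(a_2a_3a_5)=a_1a_7a_5$, $\zeta(a_1a_2a_4)=a_2a_1a_6$, and verify that the resulting $3$-space is $\langle a_1a_2a_6,a_2a_3a_7,a_1a_3a_4\rangle=K_\mu^*$ rather than $K_\mu$ — a one-line linear-algebra verification over ${\mathbb F}_2$. (Concretely: at least one of the images, say $a_2a_3a_7=\zeta(a_1a_3a_7)$, is visibly a generator of $K_\mu^*$ and is easily checked not to lie in $K_\mu$, which already shows $\zeta(K_\mu)\ne K_\mu$, hence $\zeta(K_\mu)=K_\mu^*$.)

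\textbf{Main obstacle.} There is no real obstacle — the argument is elementary. The only point requiring care is the first step: making precise the dictionary between $\lambda$-invariant subspaces of $V=H_\mu$ and ideals of ${\mathbb F}_2[t]/(t^7-1)$ containing the all-ones vector, and correctly tracking the relation $a_1\cdots a_7=1$ through the quotient so that the count of four invariant subspaces (equivalently: $\{0\}$, two $3$-dimensional, and $V$) comes out right. An alternative that avoids cyclic-code machinery, if one prefers a self-contained argument, is to note that $V$ as an ${\mathbb F}_2\langle\lambda\rangle$-module decomposes (since $|\lambda|=7$ is odd and invertible mod... no — $\lambda$ has order $7$ and $\mathrm{char}=2$ with $2$ of multiplicative order $3$ mod $7$) as a sum of the trivial module and two copies of the $3$-dimensional irreducible ${\mathbb F}_8$; but $V$ is $6$-dimensional, so in fact $V\cong U_1\oplus U_2$ with each $U_i$ the $3$-dimensional irreducible, the trivial summand being absorbed by the relation $\sum v_j=0$ — and then by Schur/irreducibility the only invariant subspaces are $0,U_1,U_2,V$, with $U_1,U_2$ identified with $K_\mu,K_\mu^*$ by direct inspection as above. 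Either route reduces the whole lemma to a handful of ${\mathbb F}_2$-linear computations.
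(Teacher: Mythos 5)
Your proof is correct, but it follows a genuinely different route from the paper's. The paper argues by brute force: it observes that a nontrivial $\lambda$-invariant subgroup $K$ must contain a representative of one of the nine $\lambda$-orbits on the $63$ nontrivial elements of $H_{\mu}$ (each element being a product $a_{i_1}\cdots a_{i_k}$ with $k\le 3$, thanks to $a_1\cdots a_7=1$), and then checks case by case that seven of these representatives force $K=H_{\mu}$, while $a_1a_2a_4$ forces $K=K_{\mu}$ and $a_1a_2a_6$ forces $K=K_{\mu}^{*}$. You instead classify the invariant subspaces a priori, identifying $H_{\mu}$ with ${\mathbb F}_2[t]/(t^7-1)$ modulo the all-ones ideal and using $t^7-1=(t-1)(t^3+t+1)(t^3+t^2+1)$ to count exactly four $\lambda$-invariant subspaces (dimensions $0,3,3,6$), after which the identification with $K_{\mu}$ and $K_{\mu}^{*}$ and the action of $\zeta$ are short ${\mathbb F}_2$-linear checks; this is cleaner and explains \emph{why} exactly two proper nontrivial invariant subgroups occur (and why they are swapped by any dihedral extension of $\langle\lambda\rangle$), at the cost of invoking the cyclic-code dictionary. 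Two small points to tidy: the two $3$-dimensional invariant subspaces are images of the $[7,4]$ Hamming codes (generator polynomials of degree $3$), not "minimal" cyclic codes; and in your alternative module-theoretic remark the two $3$-dimensional constituents of $V$ are \emph{non-isomorphic} ${\mathbb F}_2\langle\lambda\rangle$-modules (corresponding to the two distinct cubic factors) --- this non-isomorphism is essential, since two copies of a single irreducible would yield nine invariant $3$-spaces rather than two. Finally, both your count and the paper's statement should be read as excluding the trivial subgroup, which is of course also $\lambda$-invariant.
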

\begin{proof}
It is clear that the subgroups $H_{\mu}$, $K_{\mu}$ and $K_{\mu}^{*}$ are $\lambda$-invariants and that $\zeta$ permutes $K_{\mu}$ with $K_{\mu}^{*}$. We need to check that these are the only $\lambda$-invariant subgroups.
If $K$ is a $\lambda$-invariant subgroup of $H_{\mu}$, then it must contain at least one of the following elements:
$$a_{1}, \; a_{1}a_{2}, \; a_{1}a_{3}, \; a_{1}a_{4}, \; a_{1}a_{2}a_{3}, \; a_{1}a_{2}a_{4}, \; a_{1}a_{2}a_{6}, \; a_{1}a_{3}a_{4}.$$
If  $a_{1}a_{2}a_{4} \in K$, then $K=K_{\mu}$ and if $a_{1}a_{2}a_{6} \in K$, then $K=K_{\mu}^{*}$. Next, we proceed to check that $K=H_{\mu}$ for the other cases.
(i) If $a_{1} \in K$, then $K=H_{\mu}$.
(ii) If $a_{1}a_{2} \in K$, then $a_{3}a_{4}, a_{6}a_{7}=a_{1}a_{2}a_{3}a_{4}a_{5} \in K$; so $a_{5} \in K$ and $K=H_{\mu}$.
(iii) If $a_{1}a_{3} \in K$, then $a_{2}a_{4}, a_{5}a_{7}=a_{1}a_{3}a_{2}a_{4}a_{6} \in K$; so $a_{6} \in K$ and $K=H_{\mu}$.
(iv) If $a_{1}a_{4} \in K$, then $a_{2}a_{5}, a_{3}a_{6}, a_{4}a_{7}=a_{1}a_{2}a_{5}a_{3}a_{6} \in K$; so $a_{1} \in K$ and $K=H_{\mu}$.
(v) If $a_{1}a_{2}a_{3} \in K$, then $a_{5}a_{6}a_{7}=a_{1}a_{2}a_{3}a_{4} \in K$; so $a_{4} \in K$ and $K=H_{\mu}$.
(vi) If $a_{1}a_{2}a_{5} \in K$, then $a_{3}a_{4}a_{7}=a_{1}a_{2}a_{5}a_{6} \in K$; so $a_{6} \in K$ and $K=H_{\mu}$.
(vii) If $a_{1}a_{3}a_{5} \in K$, then $a_{2}a_{4},a_{6}, a_{3}a_{5}a_{7}=a_{1}a_{2}a_{4}a_{6} \in K$; so $a_{1} \in K$ and $K=H_{\mu}$.
\end{proof}


As the $\lambda$-invariant subgroup 
$$K_{\mu}=\langle a_{1}a_{3}a_{7}, a_{2}a_{3}a_{5},a_{1}a_{2}a_{4}\rangle \cong {\mathbb Z}_{2}^{3}$$
does not contains any of the involutions $a_{j}$, it acts freely on $\widehat{S}_{\mu}$ (the same holds for $K_{\mu}^{*}$).

As a consequence of the Riemann-Hurwitz formula, the quotient $S_{\mu}=\widehat{S}_{\mu}/K_{\mu}$ is a closed Riemann surface of genus seven with $G_{\mu}=H_{\mu}/K_{\mu} \cong {\mathbb Z}_{2}^{3}$ as a group of conformal automorphisms and with $S_{\mu}/G_{\mu}=\widehat{S}_{\mu}/H_{\mu}$ being the orbifold whose underlying Riemann surface is $\widehat{\mathbb C}$ and whose cone points are the points in $B_{\mu}$, each one of order $2$.

Let us denote by $a_{j}^{*}$ the conformal involution of $S_{\mu}$ induced by the involution $a_{j}$, for $j=1,\ldots,7$.

\begin{rema}
Let $\Gamma_{\mu}$ be the Fuchsian group as in remark \ref{observa1}.
As $G_{\mu}$ is abelian group, there is a torsion free finite index subgroup $N_{\mu}$ of $\Gamma_{\mu}$ so that $\Gamma_{\mu}'$ is contained in $N_{\mu}$, $S_{\mu}={\mathbb H}^{2}/N_{\mu}$ and $G_{\mu}=\Gamma_{\mu}/N_{\mu}$. In fact, 
$N_{\mu}=\llangle \Gamma_{\mu}', \alpha_{1}\alpha_{3}\alpha_{7}, \alpha_{2}\alpha_{3}\alpha_{5}, \alpha_{1}\alpha_{2}\alpha_{4}\rrangle_{\Gamma_{\mu}}.$
\end{rema}


\subsection{Algebraic equations for $S_{\mu}$}
Using the equations for $\widehat{S}_{\mu}$ and the explicit group $K_{\mu}$, classical invariant theory permits to obtain a set of equations for $S_{\mu}$. For it, we consider the affine model of $\widehat{S}_{\mu}$, say by taking $x_{7}=1$, which we denote by $\widehat{S}_{\mu}^{0}$. In this affine model the automorphism $a_{7}$ is given by $a_{7}(x_{1},x_{2},x_{3},x_{4},x_{5},x_{6})=(-x_{1},-x_{2},-x_{3},-x_{4},-x_{5},-x_{6})$. A set of generators for the algebra of invariant polynomials in ${\mathbb C}[x_1,x_2,x_3,x_4,x_5,x_6]$ under the natural linear action induced by $K_{\mu}$ is given by
 $$
 \begin{array}{llllllll}
 t_1=x_{1}^{2},& t_2=x_{2}^{2},& t_3=x_{3}^{2},& t_4=x_{4}^{2},& t_5=x_{5}^{2},& t_6=x_{6}^{2},& t_7=x_{1}x_{2}x_{5},& t_8=x_{1}x_{2}x_{3}x_{6},
 \end{array}
 $$
 $$
 \begin{array}{lllll}
 t_{9}=x_{1}x_{4}x_{6},& t_{10}=x_{1}x_{3}x_{4}x_{5},& t_{11}=x_{2}x_{4}x_{5}x_{6},& t_{12}=x_{2}x_{3}x_{4}, &t_{13}=x_{3}x_{5}x_{6}.
 \end{array}
 $$
 
 If we set $$F:\widehat{S}_{\mu}^{0} \to {\mathbb C}^{13}:
 (x_1,x_2,x_3,x_4,x_5,x_6) \mapsto (t_{1}, t_{2}, t_{3}, t_{4}, t_{5}, t_{6}, t_{7}, t_{8},t_{9},t_{10},t_{11},t_{12},t_{13}),$$
 then $F(\widehat{S}_{\mu}^{0})=S_{\mu}^{0}$ will provide the following affine model for $S_{\mu}$:
 
 \begin{equation}\label{eq1}
S_{\mu}^{0}:=  \left\{ \begin{array}{llll}
 t_1+t_2+t_3=0,& \mu_{4}\; t_1+t_2+t_4=0, & \mu_{5}\; t_1+t_2+t_5=0,& \mu_{6}\; t_1+t_2+t_6=0,\\
\mu_{7}\; t_1+t_2+1=0,& t_{6}t_{10} = t_{9}t_{13}, & t_{6}t_{7}t_{12} = t_{8}t_{11},& t_{5}t_{9}t_{12} = t_{10}t_{11},\\
t_{5}t_{8} = t_{7}t_{13}, & t_{5}t_{6}t_{12} = t_{11}t_{13}, & t_{4}t_{8} = t_{9}t_{12}, & t_{4}t_{7}t_{13} = t_{10}t_{11},\\
t_{4}t_{6}t_{7} = t_{9}t_{11}, & t_{3}t_{11} = t_{12}t_{13}, & t_{3}t_{6}t_{7} = t_{8}t_{13}, & t_{3}t_{5}t_{9} = t_{10}t_{13},\\
t_{3}t_{5}t_{6} = t_{13}^{2}, & t_{3}t_{4}t_{7} = t_{10}t_{12}, & t_{2}t_{10} = t_{7}t_{12}, & t_{2}t_{9}t_{13} = t_{8}t_{11}, \\
t_{2}t_{5}t_{9} = t_{7}t_{11}, & t_{2}t_{4}t_{13} = t_{11}t_{12}, & t_{2}t_{4}t_{5}t_{6} = t_{11}^{2}, & t_{2}t_{3}t_{9} = t_{8}t_{12},\\
t_{2}t_{3}t_{4} = t_{12}^{2}, & t_{1}t_{12}t_{13} = t_{8}t_{10}, & t_{1}t_{11} = t_{7}t_{9}, & t_{1}t_{6}t_{12} = t_{8}t_{9},\\
t_{1}t_{5}t_{12} = t_{7}t_{10}, & t_{1}t_{4}t_{13} = t_{9}t_{10}, & t_{1}t_{4}t_{6} = t_{9}^{2}, & t_{1}t_{3}t_{4}t_{5} = t_{10}^{2},\\
t_{1}t_{2}t_{13} = t_{7}t_{8}, & t_{1}t_{2}t_{5} = t_{7}^{2}, & t_{1}t_{2}t_{3}t_{6} = t_{8}^{2}.
 \end{array}
 \right\}
 \subset {\mathbb C}^{13}
 \end{equation}

Of course, one may see that the variables $t_{2}$, $t_{3}$, $t_{4}$, $t_{5}$ and $t_{6}$ are uniquely determined by the variable $t_{1}$. Other variables can also be determined in order to get a lower dimensional model. The group $G_{\mu}$ is, in this model, generated by the following three involutions:
$$(t_{1},\ldots,t_{13}) \mapsto (t_{1},t_{2},t_{3},t_{4},t_{5},t_{6},-t_{7},-t_{8},-t_{9},-t_{10},t_{11},t_{12},t_{13}),$$
$$(t_{1},\ldots,t_{13}) \mapsto (t_{1},t_{2},t_{3},t_{4},t_{5},t_{6},-t_{7},-t_{8},t_{9},t_{10},-t_{11},-t_{12},t_{13}),$$
$$(t_{1},\ldots,t_{13}) \mapsto (t_{1},t_{2},t_{3},t_{4},t_{5},t_{6},t_{7},-t_{8},t_{9},-t_{10},t_{11},-t_{12},-t_{13}).$$

\subsection{Some elliptic curves associated to $S_{\mu}$}
Let us observe that 
$$G_{\mu}=\langle a_{1}^{*},a_{2}^{*},a_{3}^{*}\rangle$$ 
and that 
$$a_{4}^{*}=a_{1}^{*}a_{2}^{*}, \; a_{5}^{*}=a_{2}^{*}a_{3}^{*}, \; a_{6}^{*}=a_{1}^{*}a_{2}^{*}a_{3}^{*}, \; a_{7}^{*}=a_{1}^{*}a_{3}^{*}.$$

In this way, each of the seven involutions $a_{r}^{*} \in G_{\mu}$ is induced by exactly one of the involutions of $H_{\mu}$ with fixed points and, in particular, it acts with exactly $4$ fixed points on $S_{\mu}$. By the Riemann-Hurwitz formula, the quotient orbifold $S_{\mu}/\langle a_{r}^{*} \rangle$ has genus three and exactly four cone points, each of order $2$ (over that orbifold there is the conformal action of the group $G_{\mu}/\langle a_{r}^{*}\rangle \cong {\mathbb Z}_{2}^{2}$ for which the cone points form one orbit).

If $a_{i}^{*}, a_{j}^{*} \in G_{\mu}$ are two different involutions and $L_{ij}=\langle a_{i}^{*}, a_{j}^{*}  \rangle \cong {\mathbb Z}_{2}^{2}$, then (by the Riemann-Hurwitz formula) the quotient orbifold $S_{\mu}/L_{ij}$ is a closed Riemann surface $T_{ij}$ of genus one with exactly six cone points, all of them of order $2$. Further, $L^{*}_{ij}=G_{\mu}/L_{ij} \cong {\mathbb Z}_{2}$ acts as a group of automorphisms of $T_{ij}$ permuting these six cone points (and fixing none of them).
These six cone points are projected onto three of the cone points of $S_{\mu}/G_{\mu}=T_{ij}/L^{*}_{ij}$; they being $\mu_{i}$, $\mu_{j}$ and $\mu_{r}$, where $a_{r}^{*}=a_{i}^{*} a_{j}^{*}$ and $\mu_{1}=\infty$, $\mu_{2}=0$ and $\mu_{3}=1$. In this way, 
 $$T_{ij}: \; y^{2}=\prod_{k \notin \{i,j,r\}} (x-\mu_{k})$$
 where, in the case $k=1$ the factor $(x-\mu_{1})$ is deleted from the previous expression.
 
 Let us observe that 
there are exactly seven different subgroups $L_{ij}$ of $G_{\mu}$, these being given by
$$L_{12},\;  L_{13},\;  L_{15},\; L_{23},\; L_{1,7},\; L_{34},\; L_{47},$$
so there are only seven elliptic curves as above; 

$$\begin{array}{lll}
T_{23}: & y_{1}^{2}=(x-\mu_{4})(x-\mu_{6})(x-\mu_{7}),\\
T_{12}: & y_{2}^{2}=(x-1)(x-\mu_{5})(x-\mu_{6})(x-\mu_{7}),\\
T_{13}: & y_{3}^{2}=x(x-\mu_{4})(x-\mu_{5})(x-\mu_{6}),\\
T_{15}: &  y_{4}^{2}=x(x-1)(x-\mu_{4})(x-\mu_{7}),\\
T_{26}: & y_{5}^{2}=(x-1)(x-\mu_{4})(x-\mu_{5}), \\
T_{34}: & y_{6}^{2}=x(x-\mu_{5})(x-\mu_{7}), \\
T_{47}: & y_{7}^{2}=x(x-1)(x-\mu_{6}).
\end{array}
$$

It can be seen that another equation for $S_{\mu}$ is provided by the fiber product of the three elliptic curves $T_{23}, T_{12}$ and $T_{13}$ as follows

\begin{equation}\label{eq3}
\left\{\begin{array}{lcl}
y_{1}^{2}=(x-\mu_{4})(x-\mu_{6})(x-\mu_{7})\\
\\
y_{2}^{2}=(x-1)(x-\mu_{5})(x-\mu_{6})(x-\mu_{7})\\
\\
y_{3}^{2}=x(x-\mu_{4})(x-\mu_{5})(x-\mu_{6})
\end{array}
\right\} \subset {\mathbb C}^{4}.
\end{equation}

In this model, the group $G_{\mu}$ is generated by the involutions 
$$(x,y_{1},y_{2},y_{3}) \mapsto (x,-y_{1},y_{2},y_{3}),$$
$$(x,y_{1},y_{2},y_{3}) \mapsto (x,y_{1},-y_{2},y_{3}),$$
$$(x,y_{1},y_{2},y_{3}) \mapsto (x,y_{1},y_{2},-y_{3}).$$

\subsection{An elliptic decomposition of the  jacobian variety $JS_{\mu}$}
If $\mu=(\mu_{4},\mu_{5},\mu_{6},\mu_{7}) \in \Omega$, then 
in \cite{CHQ} we obtained that the jacobian variety $J\widehat{S}_{\mu}$ of $\widehat{S}_{\mu}$ is isogenous to a product
$$E_{1,\mu} \times \cdots \times E_{35,\mu} \times JX_{1,\mu} \cdots \times JX_{7,\mu},$$
where each of the $E_{j,\mu}$ is an elliptic curve (corresponding to the choice of $4$ points in $B_{\mu}$) and each of the $X_{j,\mu}$ is a genus two Riemann surface (these are provided by the choice of $6$ points in $B_{\mu}$). On the other hand, $J\widehat{S}_{\mu}$ is also isogenous to a product $JS_{\mu} \times {\mathcal P}_{\mu}$, where ${\mathcal P}_{\mu}$ is a polarized abelian variety of dimension $42$. In particular, one obtains that 
 $$E_{1,\mu} \times \cdots \times E_{35,\mu} \times JX_{1,\mu} \cdots \times JX_{7,\mu} \sim JS_{\mu} \times {\mathcal P}_{\mu}.$$ 

Unfortunately, the above isogeny does not provide more expliciy information on $JS_{\mu}$. In order to get a decomposition of it we proceed as follows. First, observe that, for the seven subgroups $L_{12},  L_{13},  L_{15}, L_{23}, L_{1,7}, L_{34}, L_{47}$, it holds the following:
(i) $L_{ij}L_{kr}=L_{kr}L_{ij}$, (ii) each quotient $S_{\mu}/L_{ij}$ has genus one and (iii) $S_{\mu}/L_{ij}L_{kr}=S_{\mu}/G_{\mu}$ has genus zero. So, from Kani-Rosen decomposition theorem \cite{K-R}, one obtains the following.

\begin{theo}\label{isogeno}
For each $\mu \in \Omega$, it holds that
$$JS_{\mu} \sim T_{12} \times T_{13} \times T_{15} \times T_{23} \times T_{26} \times T_{34} \times T_{47}.$$
\end{theo}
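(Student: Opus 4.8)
The plan is to apply the Kani--Rosen decomposition theorem \cite{K-R} to the pair $(S_{\mu},G_{\mu})$, using the seven order-two subgroups $L_{12}, L_{13}, L_{15}, L_{23}, L_{1,7}, L_{34}, L_{47}$ of $G_{\mu}\cong\mathbb{Z}_2^3$ that were singled out above. The three facts needed to feed into Kani--Rosen have essentially already been recorded in the excerpt: (i) since $G_{\mu}$ is abelian, any two of these subgroups commute, so $L_{ij}L_{kr}=L_{kr}L_{ij}$ is again a subgroup; (ii) each quotient $S_{\mu}/L_{ij}$ is a genus-one Riemann surface, namely the elliptic curve $T_{ij}$ displayed above; and (iii) for any two distinct subgroups from the list, $L_{ij}L_{kr}=G_{\mu}$, so $S_{\mu}/L_{ij}L_{kr}=S_{\mu}/G_{\mu}$ has genus zero. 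I would first verify (iii) explicitly: the seven listed subgroups are exactly the seven subgroups of $\mathbb{Z}_2^3$ of order two (in the notation $a_1^*,a_2^*,a_3^*$ with $a_4^*=a_1^*a_2^*$, etc.), and any two distinct order-two subgroups of $\mathbb{Z}_2^3$ generate a subgroup of order four or eight; one checks from the explicit generators that it is always all of $G_{\mu}$ — equivalently, no nonzero element of $G_{\mu}$ lies in two of the seven subgroups, which is immediate since each nonzero element generates a unique order-two subgroup.

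Next I would set up the bookkeeping of Kani--Rosen. Writing $g=g(S_{\mu})=7$, the theorem compares $J S_{\mu}$ (up to isogeny) with the product of the Jacobians $J(S_{\mu}/L_{ij})$ weighted against $J(S_{\mu}/G_{\mu})$ and the "diagonal" Jacobian. Here $S_{\mu}/G_{\mu}$ has genus zero, so $J(S_{\mu}/G_{\mu})=0$; and the whole group $G_{\mu}$ is generated by the $L_{ij}$ (indeed by any two of them), so there is no leftover complementary factor. The numerical consistency check is that $\sum_{ij} g(S_{\mu}/L_{ij}) = 7 = g(S_{\mu})$: there are exactly seven subgroups in the list and each quotient has genus one, so the dimensions match, $7 = 7$. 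This equality of dimensions is precisely the signal that the Kani--Rosen isogeny has no kernel-dimension defect and reads simply $J S_{\mu} \sim \prod_{ij} T_{ij}$, which is the asserted statement with the seven factors $T_{12}, T_{13}, T_{15}, T_{23}, T_{26}, T_{34}, T_{47}$.

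The one genuinely substantive point — and the main obstacle — is justifying that the collection of seven idempotents in the rational group algebra $\mathbb{Q}[G_{\mu}]$ attached to the seven subgroups $L_{ij}$ sums to the identity (equivalently, that the Prym/isotypic pieces they carve out exhaust $J S_{\mu}$ with no multiplicity mismatch). For $G_{\mu}\cong\mathbb{Z}_2^3$ this is a finite, explicit computation in the character lattice: the seven nontrivial characters of $\mathbb{Z}_2^3$ correspond bijectively to the seven order-two subgroups (a nontrivial character $\chi$ has kernel of index two; its orthogonal complement — the unique order-two subgroup on which every other nontrivial character… ) — more directly, each $T_{ij}=S_{\mu}/L_{ij}$ is acted on by $G_{\mu}/L_{ij}\cong\mathbb{Z}_2$ with quotient of genus zero, so $J T_{ij}$ realizes exactly the $G_{\mu}$-isotypic component indexed by the single nontrivial character of $G_{\mu}/L_{ij}$, which is a one-dimensional piece. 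Running over the seven subgroups picks out all seven nontrivial characters exactly once each, and the trivial character contributes $J(S_{\mu}/G_{\mu})=0$; hence the idempotents sum to $1$ and the isogeny
\begin{equation*}
J S_{\mu} \sim T_{12} \times T_{13} \times T_{15} \times T_{23} \times T_{26} \times T_{34} \times T_{47}
\end{equation*}
follows. I would present this last paragraph as the heart of the proof, with the group-algebra computation either done by hand for $\mathbb{Z}_2^3$ or cited directly from the form of Kani--Rosen in \cite{K-R}, and the rest as routine verification of hypotheses.
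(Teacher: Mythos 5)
Your overall strategy coincides with the paper's: verify that the seven subgroups $L_{12}, L_{13}, L_{15}, L_{23}, L_{17}, L_{34}, L_{47}$ satisfy the hypotheses of the Kani--Rosen theorem — pairwise commuting products, genus-one intermediate quotients, genus-zero quotients $S_{\mu}/L_{ij}L_{kr}$ — and then cite \cite{K-R}. The paper does exactly this, and your closing character-theoretic paragraph is a reasonable way of making that citation explicit for $G_{\mu}\cong{\mathbb Z}_{2}^{3}$.

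There is, however, a concrete error in your verification of the hypotheses. The subgroups $L_{ij}=\langle a_{i}^{*},a_{j}^{*}\rangle$ are the seven subgroups of $G_{\mu}$ isomorphic to ${\mathbb Z}_{2}^{2}$ — order four, index two, the seven ``lines'' through the involutions $a_{1}^{*},\dots,a_{7}^{*}$ in a Fano-plane configuration — and \emph{not} the seven order-two subgroups. Your justification of (iii), namely that any two distinct order-two subgroups of ${\mathbb Z}_{2}^{3}$ generate all of $G_{\mu}$ because no nonzero element lies in two of them, fails on both counts: two distinct order-two subgroups of ${\mathbb Z}_{2}^{3}$ generate a group of order exactly four, never eight, so $S_{\mu}/L_{ij}L_{kr}$ would not be $S_{\mu}/G_{\mu}$; and the quotient of $S_{\mu}$ by a single involution $a_{r}^{*}$ has genus three (as computed earlier in the section), not one, so (ii) would also fail. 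The correct verification of (iii) is simply that any two \emph{distinct index-two} subgroups of $G_{\mu}$ generate $G_{\mu}$ because each is maximal; note that distinct $L_{ij}$'s do share nontrivial elements (e.g.\ $a_{1}^{*}\in L_{12}\cap L_{13}$), so the disjointness you invoke is neither available nor needed. Once the $L_{ij}$ are correctly identified as the kernels of the seven nontrivial characters of $G_{\mu}$, your final paragraph (each $JT_{ij}$ carries the isotypic piece of the unique nontrivial character of $G_{\mu}/L_{ij}$, the trivial character contributes $J(S_{\mu}/G_{\mu})=0$, and the dimension count $7\times 1=7=g(S_{\mu})$ matches) is correct and amounts to the paper's one-line appeal to Kani--Rosen.
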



The above asserts that  the jacobian variety $JS_{\mu}$ of $S_{\mu}$ is completely decomposable (see also \cite{CHQ}).
Observe that the seven elliptic factors are some of the $35$ elliptic factors of $J\widehat{S}_{\mu}$.

\subsection{A remark about $K^{*}_{\mu}$}
As $K^{*}_{\mu}$ also acts freely on $\widehat{S}_{\mu}$, we may consider the Riemann surface $S_{\mu}^{*}=\widehat{S}_{\mu}/K^{*}_{\mu}$, which admits the group ${\mathbb Z}_{2}^{3} \cong G_{\mu}^{*}=H_{\mu}/K^{*}_{\mu}=\langle b_{1},b_{2},b_{3}\rangle$, where $b_{j}$ is induced by $a_{j}$. In this case, $$b_{4}=b_{1}b_{3}, \; b_{5}=b_{1}b_{2}b_{3}, \; b_{6}=b_{1}b_{2}, \; b_{7}=b_{2}b_{3}.$$

Working in a similar fashion as for $K_{\mu}$, one can obtain the following set of equations for $S_{\mu}^{*}$
\begin{equation}\label{eq3}
\left\{\begin{array}{lcl}
w_{1}^{2}=(z-\mu_{4})(z-\mu_{5})(z-\mu_{6})\\
\\
w_{2}^{2}=(z-\mu_{4})(z-\mu_{5})(z-\mu_{6})\\
\\
w_{3}^{2}=z(z-1)(z-\mu_{4})(z-\mu_{6})
\end{array}
\right\} \subset {\mathbb C}^{4}.
\end{equation}

In this model, the group $G_{\mu}^{*}$ is again generated by the involutions 
$$(z,w_{1},w_{2},w_{3}) \mapsto (z,-w_{1},w_{2},w_{3}),$$
$$(z,w_{1},w_{2},w_{3}) \mapsto (z,w_{1},-w_{2},w_{3}),$$
$$(z,w_{1},w_{2},w_{3}) \mapsto (z,w_{1},w_{2},-w_{3}).$$

Direct computations permits to see that $$S_{\mu} \cong S_{\widetilde{\mu}}^{*}$$
when $$\widetilde{\mu}=\left( \frac{\mu_{6}}{\mu_{5}}, \frac{\mu_{7}}{\mu_{5}}, \frac{\mu_{4}}{\mu_{5}},\frac{1}{\mu_{5}} \right).$$

In fact, just use the change of variable induced by the transformation $T(x)=x/\mu_{5}$ to $S_{\mu}$ to obtain $S^{*}_{\widetilde{\mu}}$.

\section{The Fricke-Macbeath curve}\label{Sec:FM}
In this section we observe that the Fricke-Macbeath curve is one of the members of the previous $4$-dimensional family $S_{\mu}$ of genus seven Riemann surfaces.

First, as already observed in the introduction, the Fricke-Macbeath curve $S$ admits a group $K = G \rtimes \langle T \rangle \cong {\mathbb Z}_{2}^{3} \rtimes {\mathbb Z}_{7}$ of conformal automorphisms. Let us consider the orbifold $S/G$, which has some number of cone points, each one of order $2$. Moreover, the automorphism $T$ induces an automorphism $A_{T}$ of $S/G$, of order seven and keeping invariant these cone points.

\begin{prop}
The quotient orbifold $S/G$ is the Riemann sphere with exactly seven cone points of order $2$.
\end{prop}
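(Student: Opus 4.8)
The plan is to pin down the genus and the cone-point data of $S/G$ purely from the constraint that $S$ carries the extra order seven automorphism $T$ normalizing $G$, together with what we already know: $S$ has genus $7$, $G\cong{\mathbb Z}_2^3$ acts with $S/G$ of genus zero (this is the known structure of the Fricke--Macbeath curve recalled in the introduction), and every non-trivial element of $G$ acting with fixed points does so with the number of fixed points forced by Riemann--Hurwitz. First I would write down the Riemann--Hurwitz formula for the degree $8$ cover $S\to S/G$. If $S/G$ has genus $\gamma$ and $n$ cone points, each necessarily of order $2$ since $G$ is an elementary abelian $2$-group (a point of $S/G$ with cone order $m$ comes from a point of $S$ whose $G$-stabilizer is cyclic of order $m$, hence $m\in\{1,2\}$), then
\begin{equation*}
2\cdot 7-2 = 8\bigl(2\gamma-2\bigr) + n\cdot 8\cdot\Bigl(1-\tfrac12\Bigr),
\end{equation*}
i.e. $12 = 16\gamma-16+4n$, so $4\gamma + n = 7$. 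Thus $(\gamma,n)\in\{(0,7),(1,3)\}$ (the case $\gamma\geq 2$ is excluded, and $n$ cannot be negative or non-integral). It remains to rule out $(\gamma,n)=(1,3)$.

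The key step is therefore to eliminate the possibility that $S/G$ is a genus one orbifold with three cone points of order two, and this is where the order seven automorphism $T$ enters. Since $T$ normalizes $G$, it descends to an automorphism $A_T$ of the orbifold $S/G$ of order dividing $7$; as $T$ has order $7$ and the kernel of ${\rm Aut}(S)\to{\rm Aut}(S/G)$ restricted to $\langle T\rangle$ can only be trivial or all of $\langle T\rangle$ (prime order), and $T\notin G$ would otherwise force $T$ to act trivially on $S/G$ while $K/G\cong{\mathbb Z}_7$ acts faithfully on $S/G$, we get that $A_T$ has order exactly $7$. Now $A_T$ must permute the $n$ cone points of $S/G$ among themselves (it preserves cone orders). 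If $(\gamma,n)=(1,3)$, then $A_T$ is an order seven automorphism of a genus one Riemann surface permuting a set of three points; but an automorphism of a genus one surface has order at most $6$ when it has a fixed point, and a translation (fixed-point-free) has order $7$ only if the orbits on any finite invariant set have size $1$ or $7$ — three points cannot be so partitioned unless all three are fixed, contradicting that a non-trivial translation fixes nothing. Hence $(\gamma,n)=(1,3)$ is impossible, and we are left with $(\gamma,n)=(0,7)$: the Riemann sphere with exactly seven cone points of order $2$. (In the genus zero case the same count is consistent: $A_T$ of order $7$ acts on $\widehat{\mathbb C}$ with two fixed points and permutes the seven cone points in a single orbit of size $7$.)

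The main obstacle I expect is the clean elimination of the $(\gamma,n)=(1,3)$ case: one must be a little careful that $A_T$ genuinely has order $7$ and not order $1$ on $S/G$, and that the statement "an automorphism of a genus one surface fixing a point has order $\leq 6$" is invoked correctly (equivalently, $7\nmid|{\rm Aut}(E,p)|$ for an elliptic curve $E$, since $|{\rm Aut}(E,p)|\in\{2,4,6\}$), together with the translation analysis for the fixed-point-free case. A slightly slicker alternative, which I would mention, is to lift directly: the full group $K$ acts on $S$, and $K\to{\rm Aut}(S/G)$ has image $\langle A_T\rangle\cong{\mathbb Z}_7$, so $S/K = (S/G)/\langle A_T\rangle$ is an orbifold whose underlying surface has genus $0$ if $\gamma=0$ and would have genus $\geq 0$ with an extra order-$7$ quotient structure if $\gamma=1$ — and then the known fact that $S$ is the Hurwitz curve, with $S/{\rm Aut}(S)$ the $(2,3,7)$-orbifold and $K$ of index $6$ in ${\rm Aut}(S)={\rm PSL}_2(8)$, forces $S/K$ to be $\widehat{\mathbb C}$ with three specific cone points, from which the structure of $S/G$ as the intermediate cover is read off directly. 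Either route reaches the same conclusion; I would present the Riemann--Hurwitz argument as the primary one since it does not presuppose the full Hurwitz group structure.
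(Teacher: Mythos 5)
Your proposal is correct and follows essentially the same route as the paper: Riemann--Hurwitz for the degree $8$ cover forces $(\gamma,n)\in\{(0,7),(1,3)\}$, and the genus one case is excluded because an order seven automorphism preserving a three-element set must fix all three points, while no genus one Riemann surface admits an order seven conformal automorphism with fixed points. Your added details (the explicit $4\gamma+n=7$ computation and the verification that $A_T$ has order exactly $7$) are correct and merely make explicit what the paper leaves implicit.
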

\begin{proof}
The automorphism $T$ must permute the seven elements of order two in $G$. There are only two possibilities: (i) these seven elements form one orbit or (ii) each element of order two commutes with $T$. The quotient orbit $S/G$, by the Riemann-Hurwitz formula, is either a torus with exactly three cone points of order $2$ or the Riemann sphere $\widehat{\mathbb C}$ with exactly $7$ cone points of order $2$. If $S/G$ has genus one, as there are only three cone points and such a set is invariant under $A_{T}$, it follows that $A_{T}$ must have fixed points; a contradiction as there is no genus one Riemann surface admitting a conformal automorphism of order seven with fixed points. It follows that $S/G$ is the Riemann sphere (with exactly seven cone points of order $2$). In this last situation, $A_{T}$ only has two fixed points, so $T$ only satisfies condition (i) above.
\end{proof}


Let us consider a regular branched cover $Q:S \to \widehat{\mathbb C}$ whose deck group is $G \cong {\mathbb Z}_{2}^{3}$. Up to postcomposing $Q$ by a suitable M\"obius transformation, we may assume that $A_{T}(z)=\rho z$, where $\rho=e^{2 \pi i/7}$. As the seven cone points of order $2$ are cyclically permuted by $A_{T}$, we may also assume that they are given by the points
$$\lambda_{1}=1,\; \lambda_{2}=\rho,\; \lambda_{3}=\rho^{2},\; \lambda_{4}=\rho^{3},\; \lambda_{5}=\rho^{4},\; \lambda_{6}=\rho^{5},\; \lambda_{7}=\rho^{6}.$$

Let us observe that 
$${\mu}^{0}=(A(\rho^{3}),A(\rho^{4}),A(\rho^{5}),A(\rho^{6})) \in \Omega.$$

As $S/G={\widehat S}_{{\mu}^{0}}/H_{{\mu}^{0}}$ and $G$ is abelian (recall that $\widehat{S}_{{\mu}^{0}}$ is the highest abelian branched cover),  
by classical covering theory, there should be a subgroup $K<H_{\mu^{0}}$, $K\cong {\mathbb Z}_{2}^{3}$, acting freely on $\widehat{S}_{\mu^{0}}$ so that there is an isomorphism
$\phi:S \to \widehat{S}_{\mu^{0}}/K$ with $\phi G \phi^{-1}=H_{\mu^{0}}/K$.

Let us also observe that the rotation $A_{T}(z)=\rho z$ lifts under $P_{\mu^{0}}$ to an automorphism
$\widehat{T}$ of $\widehat{S}_{\mu^{0}}$ of order $7$ of the form \cite{GHL} 
$$\widehat{T}([x_{1}:\cdots:x_{7}])=[x_{7}:c_{2}x_{1}:x_{2}:c_{4}x_{3}:c_{5}x_{4}:c_{6}x_{5}:c_{7}x_{6}],$$
where
$$c_{2}=\sqrt{A(\rho^{6})},\; c_{4}=i\sqrt{A(\rho^{3})},\; c_{5}=i\sqrt{A(\rho^{4})},\; c_{6}=i\sqrt{A(\rho^{5})},\;c_{7}=i\sqrt{A(\rho^{6})}.$$

One has that $\widehat{T} a_{j} \widehat{T}^{-1}=a_{j+1}$, for $j=1,...,6$ and $\widehat{T} a_{7} \widehat{T}^{-1}=a_{1}$. It follows that, for this explicit value of $\mu=\mu^{0}$, one has that $\lambda=\Lambda_{\widehat{T}}$.

The subgroup $K$ above must satisfy that $\widehat{T} K \widehat{T}^{-1}=K$ (since $R$ also lifts to the Fricke-Macbeath curve $S$), that is, $K$ is a $\lambda$-invariant subgroup of $H_{\mu^{0}}$; it follows (by Lemma \ref{rhoinv}) that 
$K \in \{K_{\mu^{0}},K^{*}_{\mu^{0}}\}$. But, for this value of $\mu=\mu^{0}$, the automorphism $\zeta$ is induced by an automorphism of $\widehat{S}_{\mu^{0}}$ given by
$$U([x_{1}:\cdots:x_{7}])=[x_{2}:d_{2}x_{1}:x_{7}:d_{4}x_{6}:d_{5}x_{5}:d_{6}x_{4}:d_{7}x_{3}],$$
where
$$d_{2}=d_{7}=\sqrt{A(\rho^{6})}, \; d_{4}=\sqrt{A(\rho^{3})},\; d_{5}=\sqrt{A(\rho^{4})}, \; d_{6}=\sqrt{A(\rho^{5})}.$$

In particular, both genus seven Riemann surfaces unifomized by $K_{\mu^{0}}$ and $K_{\mu^{0}}^{*}$ are isomorphic. In this way, we may assume that 
$$K=K_{\mu^{0}}=\langle a_{1}a_{3}a_{7}, a_{2}a_{3}a_{5},a_{1}a_{2}a_{4}\rangle.$$

\begin{rema}
Another way to see that we may assume $K=K_{\mu^{0}}$ is as follows.
The subgroup $$K_{\mu^{0}}=\langle a_{1}a_{3}a_{7}, a_{2}a_{3}a_{5},a_{1}a_{2}a_{4}\rangle \cong {\mathbb Z}_{2}^{3}$$ acts freely on $\widehat{S}_{\mu^{0}}$ and it is normalized by  $\widehat{T}$. In particular, 
$S^{*}=\widehat{S}_{\mu^{0}}/K_{\mu_{0}}$ is a closed Riemann surface of genus $7$ admitting the group 
$L=H_{\mu^{0}}/K_{\mu^{0}} \cong {\mathbb Z}_{2}^{3}$ as a group of conformal automorphisms. It also admits an automorphism $T^{*}$ of order $7$ (induced by $\widehat{T}$) permuting cyclically the involutions $a_{j}^{*}$. As $S^{*}/\langle L, T^{*}\rangle=\widehat{S}_{\mu^{0}}/\langle H_{\mu^{0}},\widehat{T}\rangle$ has signature $(0;2,7,7)$, and the Fricke-Macbeath curve is the only closed Riemann surface of genus seven, up to isomorphisms, admitting a group of automorphisms isomorphic to ${\mathbb Z}_{2}^{3} \rtimes {\mathbb Z}_{7}$ with corresponding quotient orbifold of the previous signature, we obtain the desired result. 
\end{rema}


Summarizing all the above is the following.

\begin{theo}
The Fricke-Macbeath curve is isomorphic to 
the Riemann surface $S_{\mu^{0}}$, where
$$\mu^{0}=\left(\frac{(1+\rho)(\rho^{2}-1)}{\rho^{3}-1},\frac{(1+\rho)(\rho^{3}-1)}{\rho^{4}-1},\frac{(1+\rho)(\rho^{4}-1)}{\rho^{5}-1},\frac{(1+\rho)(\rho^{5}-1)}{\rho^{6}-1}\right).$$

In particular, the Fricke-Macbeath curve is given by the fiber product equation 
\begin{equation}\label{eq00}
\left\{\begin{array}{lcl}
z_{1}^{2}=(u-A(\rho^{3}))(u-A(\rho^{5}))(u-A(\rho^{6}))\\
\\
z_{2}^{2}=(u-1)(u-A(\rho^{4}))(u-A(\rho^{5}))(u-A(\rho^{6}))\\
\\
z_{3}^{2}=u(u-A(\rho^{3}))(u-A(\rho^{4}))(u-A(\rho^{5}))
\end{array}
\right\} \subset {\mathbb C}^{4}.
\end{equation}
\end{theo}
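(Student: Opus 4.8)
The plan is to assemble the ingredients already developed in the excerpt rather than to prove anything genuinely new. The statement has two parts: first, that the Fricke-Macbeath curve $S$ is isomorphic to $S_{\mu^{0}}$ for the explicitly given parameter $\mu^{0}$; second, that consequently $S$ is given by the displayed fiber product \eqref{eq00}. The second part is immediate once the first is established, because we already exhibited in \eqref{eq3} the fiber-product model of $S_{\mu}$ built from the three elliptic curves $T_{23},T_{12},T_{13}$, and substituting $\mu=\mu^{0}$ with $\mu_{4}=A(\rho^{3})$, $\mu_{5}=A(\rho^{4})$, $\mu_{6}=A(\rho^{5})$, $\mu_{7}=A(\rho^{6})$ turns those three equations into exactly \eqref{eq00}; the only bookkeeping is to record that $\mu_{1}=\infty$, $\mu_{2}=0$, $\mu_{3}=1$ are the images under $A$ of $1,\rho,\rho^{2}$ respectively, so that the factors $x$ and $(x-1)$ that appear in \eqref{eq00} are precisely the factors $(x-\mu_{2})$ and $(x-\mu_{3})$ surviving in $T_{13}$ and $T_{12}$.

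For the first part I would argue as follows. By the Proposition just proved, $S/G\cong\widehat{\mathbb C}$ with seven cone points of order two cyclically permuted by the induced order-seven M\"obius transformation $A_{T}$; normalizing $Q$ so that $A_{T}(z)=\rho z$ and placing one cone point at $1$ forces the cone points to be $\{1,\rho,\dots,\rho^{6}\}$. Applying the M\"obius transformation $A$ (which sends $1\mapsto\infty$, $\rho\mapsto 0$, $\rho^{2}\mapsto 1$) identifies this orbifold with $\widehat{S}_{\mu^{0}}/H_{\mu^{0}}$, where $\mu^{0}=(A(\rho^{3}),A(\rho^{4}),A(\rho^{5}),A(\rho^{6}))\in\Omega$. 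Since $\widehat{S}_{\mu^{0}}$ is the maximal abelian (in fact ${\mathbb Z}_{2}^{6}$) branched cover of this orbifold and $G\cong{\mathbb Z}_{2}^{3}$ is abelian, covering theory produces a subgroup $K<H_{\mu^{0}}$ with $K\cong{\mathbb Z}_{2}^{3}$, acting freely, and an isomorphism $\phi\colon S\to\widehat{S}_{\mu^{0}}/K$ conjugating $G$ to $H_{\mu^{0}}/K$. The order-seven automorphism $T$ of $S$ lifts (through $P_{\mu^{0}}$, using the explicit lift $\widehat{T}$ recorded above from \cite{GHL}) to an order-seven automorphism of $\widehat{S}_{\mu^{0}}$ normalizing $H_{\mu^{0}}$ and inducing $\lambda=\Lambda_{\widehat{T}}$ on $H_{\mu^{0}}$; hence $K$ must be $\lambda$-invariant. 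By Lemma \ref{rhoinv} the only such subgroups isomorphic to ${\mathbb Z}_{2}^{3}$ that act freely are $K_{\mu^{0}}$ and $K_{\mu^{0}}^{*}$, and the explicit automorphism $U$ (inducing $\zeta$, which swaps $K_{\mu^{0}}$ and $K_{\mu^{0}}^{*}$ by Lemma \ref{rhoinv}) shows $\widehat{S}_{\mu^{0}}/K_{\mu^{0}}\cong\widehat{S}_{\mu^{0}}/K_{\mu^{0}}^{*}$. Therefore $S\cong\widehat{S}_{\mu^{0}}/K_{\mu^{0}}=S_{\mu^{0}}$, and finally one unwinds $A(x)=(1+\rho)(x-\rho)/(\rho(x-1))$ to write $A(\rho^{k})=(1+\rho)(\rho^{k}-\rho)/(\rho(\rho^{k}-1))=(1+\rho)(\rho^{k-1}-1)/(\rho^{k}-1)$, which gives the four closed-form expressions in the statement.

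An alternative, and perhaps cleaner, route for the identification of $S$ with $S_{\mu^{0}}$ is the uniqueness argument indicated in the Remark following Lemma~\ref{rhoinv}'s consequences: the surface $S^{*}=\widehat{S}_{\mu^{0}}/K_{\mu^{0}}$ has genus seven, carries $L=H_{\mu^{0}}/K_{\mu^{0}}\cong{\mathbb Z}_{2}^{3}$ together with an order-seven automorphism $T^{*}$ induced by $\widehat{T}$ and permuting the $a_{j}^{*}$ cyclically, so $\langle L,T^{*}\rangle\cong{\mathbb Z}_{2}^{3}\rtimes{\mathbb Z}_{7}$ with quotient orbifold of signature $(0;2,7,7)$; since Macbeath's theorem gives a \emph{unique} genus-seven curve with such a group action, $S^{*}$ must be the Fricke-Macbeath curve. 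I would present both routes briefly, as they reinforce each other.

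The main obstacle is essentially bookkeeping rather than conceptual: one must verify carefully that $\mu^{0}$ indeed lies in $\Omega$ (the seven points $1,\rho,\dots,\rho^{6}$ are distinct and none is sent by $A$ to $0$, $1$, or $\infty$ beyond the three chosen normalization points, which is automatic since $A$ is a M\"obius bijection), that the lift $\widehat{T}$ from \cite{GHL} really does conjugate $a_{j}$ to $a_{j+1}$ so that $\Lambda_{\widehat{T}}=\lambda$ exactly (this is asserted in the text), and that substituting $\mu=\mu^{0}$ into \eqref{eq3} reproduces \eqref{eq00} term by term with the correct deleted factor at $\mu_{1}=\infty$. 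None of this requires a hard computation; the only subtlety is making sure the normalization conventions — where $A_{T}$ fixes $0$ and $\infty$, where the cone point labelled $\lambda_{1}$ is placed, and how $A$ reorders the seven branch points into $(\infty,0,1,\mu_{4},\mu_{5},\mu_{6},\mu_{7})$ — are consistent throughout, so that the indices in $K_{\mu^{0}}=\langle a_{1}a_{3}a_{7},a_{2}a_{3}a_{5},a_{1}a_{2}a_{4}\rangle$ match the indices appearing in the three elliptic curves $T_{23},T_{12},T_{13}$ used to form the fiber product.
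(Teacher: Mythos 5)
Your proposal is correct and follows essentially the same route as the paper: normalize so that $A_{T}(z)=\rho z$ with cone points $1,\rho,\dots,\rho^{6}$, use covering theory to obtain $K<H_{\mu^{0}}$ with $S\cong\widehat{S}_{\mu^{0}}/K$, invoke the lift $\widehat{T}$ and Lemma \ref{rhoinv} to pin $K$ down to $K_{\mu^{0}}$ or $K_{\mu^{0}}^{*}$, and use $U$ (inducing $\zeta$) to identify the two quotients; the paper even records your alternative uniqueness argument via the $(0;2,7,7)$ signature as a remark. The passage to equation \eqref{eq00} by substituting $\mu=\mu^{0}$ into \eqref{eq3} is likewise exactly what the paper does.
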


\begin{rema}
It can be seen that the fiber product equation \eqref{eq00} is isomorphic to Wiman's fiber product \eqref{eq0} using the change of variable $u=A(x)$.
\end{rema}


In this case, i.e., for $\mu=\mu^{0}$, all the seven elliptic curves $T_{ij}$ are isomorphic to 
$$E:= y^{2}=x(x-1)\left(x-\frac{(1+\rho)(\rho^{5}-1)}{\rho^{6}-1}\right),$$
in particular, as a consequence of Theorem \ref{isogeno}, we obtain the following well known fact.

\begin{coro}
If $S$ is the Fricke-Macbeath curve, then
$$JS=JS_{\mu^{0}} \sim E^{7}.$$
\end{coro}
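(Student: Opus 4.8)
The plan is to deduce the corollary directly from Theorem \ref{isogeno} together with the explicit description of the seven elliptic quotients $T_{ij}$ already recorded in the excerpt. Recall that Theorem \ref{isogeno} gives, for every $\mu\in\Omega$, an isogeny $JS_{\mu}\sim T_{12}\times T_{13}\times T_{15}\times T_{23}\times T_{26}\times T_{34}\times T_{47}$, where each $T_{ij}$ is the genus one curve $y^{2}=\prod_{k\notin\{i,j,r\}}(x-\mu_{k})$ with $a_r^*=a_i^*a_j^*$ and $(\mu_1,\mu_2,\mu_3)=(\infty,0,1)$. So it suffices to specialize $\mu=\mu^{0}$ and check that all seven of these curves are isomorphic, say to $E$. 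Since an isogeny is in particular preserved under replacing a factor by an isomorphic curve, this immediately yields $JS_{\mu^{0}}\sim E^{7}$, and since $S\cong S_{\mu^{0}}$ (the preceding theorem) we get $JS=JS_{\mu^0}\sim E^7$.

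First I would fix $\mu=\mu^{0}=(A(\rho^{3}),A(\rho^{4}),A(\rho^{5}),A(\rho^{6}))$ and write out the seven defining cubics/quartics explicitly from the table of $T_{ij}$'s, i.e. with $\mu_4=A(\rho^3)$, $\mu_5=A(\rho^4)$, $\mu_6=A(\rho^5)$, $\mu_7=A(\rho^6)$. The cleanest way to see that they all define the same elliptic curve is to pull everything back along $u=A(x)$ to the $x$-coordinate, where the seven branch sets become $4$-element subsets of $\{1,\rho,\rho^2,\rho^3,\rho^4,\rho^5,\rho^6\}$; indeed, under this coordinate change the three $T_{ij}$ appearing in \eqref{eq00} become Wiman's three curves in \eqref{eq0}, and the remaining four are handled the same way. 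Then I would invoke the extra symmetry: the order seven automorphism $T^*$ of $S_{\mu^0}$ (equivalently, the rotation $A_T(z)=\rho z$ downstairs) cyclically permutes the seven involutions $a_r^*$, hence cyclically permutes the seven subgroups $L_{ij}$ and therefore the seven quotient curves $T_{ij}$; this forces all seven to be mutually isomorphic. Identifying the common isomorphism class with $E=\{y^{2}=x(x-1)(x-A(\rho^{6}))\}$ is then a matter of presenting one of them, e.g. $T_{47}:y^{2}=x(x-1)(x-\mu_6)$ with $\mu_6=A(\rho^5)=\tfrac{(1+\rho)(\rho^5-1)}{\rho^6-1}$ — wait, one checks $\mu_6$ equals the quoted $j$-invariant parameter, or simply rewrites $E$'s parameter via a Möbius/quadratic-twist normalization of the branch four-tuple.

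The only genuine content beyond bookkeeping is verifying that, after the substitution $u=A(x)$, the four branch points of each of the seven $T_{ij}$ at $\mu=\mu^0$ do lie in $\{1,\rho,\dots,\rho^6\}$ and that each resulting four-point configuration has the same cross-ratio (up to the anharmonic $S_3$-action), so that the seven hyperelliptic/elliptic curves are isomorphic. That cross-ratio invariance is precisely what the $\langle A_T\rangle\cong\mathbb Z_7$-action guarantees: $A_T$ permutes the seven branch points $\{1,\rho,\dots,\rho^6\}$ of $P_{\mu^0}$ cyclically and permutes the seven distinguished four-subsets among themselves, so it carries each $T_{ij}$ isomorphically onto another, and transitivity of the $\mathbb Z_7$-action on the set of seven subgroups does the rest. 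I expect the main (and really only) obstacle to be the slightly fiddly combinatorial check that the seven subsets $\{k:k\notin\{i,j,r\}\}$ for the list $L_{12},L_{13},L_{15},L_{23},L_{17},L_{34},L_{47}$ form a single orbit under the relevant cyclic shift — this is the $\mathbb Z_7$-translate structure of a Fano-plane–type design — after which the conclusion $JS\sim E^{7}$ is automatic.

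Concretely, the proof would read: by Theorem \ref{isogeno}, $JS_{\mu^0}\sim \prod_{(i,j)} T_{ij}$ over the seven pairs listed there. The automorphism $T^*$ of $S_{\mu^0}$ of order $7$ permutes the seven involutions $a_r^*\in G_{\mu^0}$ in a single cycle, hence permutes the seven rank-two subgroups $L_{ij}$, and hence induces isomorphisms among the seven quotient curves $T_{ij}=S_{\mu^0}/L_{ij}$; since the induced permutation on these seven subgroups is a $7$-cycle (equivalently, $\langle A_T\rangle$ acts transitively on the seven four-point subsets of $\{1,\rho,\dots,\rho^6\}$ that arise), all seven $T_{ij}$ are mutually isomorphic, say $T_{ij}\cong E$. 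Exhibiting $E$ in the stated Legendre-type form is then a one-line normalization of the branch data of, e.g., $T_{47}$. Combining with $JS_{\mu^0}\sim T_{47}^{7}\cong E^{7}$ and $S\cong S_{\mu^0}$ finishes the proof.
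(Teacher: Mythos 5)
Your proposal is correct and follows essentially the same route as the paper: specialize Theorem \ref{isogeno} to $\mu=\mu^{0}$ and observe that the seven elliptic factors $T_{ij}$ become mutually isomorphic, which the paper simply asserts in the sentence preceding the corollary. Your justification of that assertion --- the order-seven automorphism normalizes $G_{\mu^{0}}$, permutes the seven index-two subgroups $L_{ij}$ in a single cycle (the Fano-plane translate structure of the triples $\{i,j,r\}$), and hence induces isomorphisms among the quotients $S_{\mu^{0}}/L_{ij}$ --- is a valid and welcome filling-in of the detail the paper leaves implicit.
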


\begin{rema}
We also note that 
$$J\widehat{S}_{\mu^{0}} \sim E_{1}^{7} \times E_{2}^{7} \times E_{3}^{7} \times E_{4}^{7} \times E_{5}^{7} \times E_{6}^{14}$$
where 
$$E_{1}: \; y^{2}=(x-1)(x-\rho)\left(x-\rho^{2}\right)\left(x-\rho^{3}\right),$$
$$E_{2}: \; y^{2}=(x-1)(x-\rho)\left(x-\rho^{2}\right)\left(x-\rho^{4}\right),$$
$$E_{3}: \; y^{2}=(x-1)(x-\rho)\left(x-\rho^{2}\right)\left(x-\rho^{5}\right),$$
$$E_{4}: \; y^{2}=(x-1)(x-\rho)\left(x-\rho^{3}\right)\left(x-\rho^{4}\right),$$
$$E_{5}: \; y^{2}=(x-1)(x-\rho)\left(x-\rho^{4}\right)\left(x-\rho^{5}\right),$$
$$E_{6}: \; y^{2}=\left(x-1\right)\left(x-\cos(2\pi/7)\right)\left(x-\cos(4\pi/7)\right)\left(x-\cos(6\pi/7)\right).$$

Also, observe that $E_{3} \cong E$.
\end{rema}

\subsection*{Acknowledgments}
The author is grateful to the referee whose suggestions, comments and corrections done to the preliminary versions helped to improve the presentation of the paper. I also want to thanks I. Cheltsov for the reference to the work of Y. Prokhorov.


\end{document}